\begin{document}

\reversemarginpar






\newtheorem{theorem}{Theorem}[section]
\newtheorem{corollary}[theorem]{Corollary}
\newtheorem{lemma}[theorem]{Lemma}
\newtheorem{prop}[theorem]{Proposition}
\newtheorem*{thmA}{Theorem A}
\newtheorem*{thmB}{Theorem B}
\theoremstyle{definition}
\newtheorem{example}[theorem]{Example}
\newtheorem{question}[theorem]{Question}
\newtheorem{examples}[theorem]{Examples}
\newtheorem{conj}[theorem]{Conjecture}
\newtheorem{rem}[theorem]{Remark}
\newtheorem{rems}[theorem]{Remarks}
\newtheorem{definition}[theorem]{Definition}
\newtheorem{definitions}[theorem]{Definitions}

\newcommand{\itemizespacing}{\addtolength{\itemsep}{0.5\baselineskip}}

\newcommand{\codim}{\operatorname{codim}}
\newcommand{\dist}{\operatorname{dist}}
\newcommand{\spann}{\operatorname{span}}
\newcommand{\C}{\mathbb{C}}

\newcommand{\Y}{Y}

\newcommand{\K}{\mathcal{K}}

\newcommand{\D}{\mathbb{D}}
\newcommand{\N}{\mathbb{N}}
\newcommand{\R}{\mathbb{R}}
\newcommand{\T}{\mathbb{T}}
\newcommand{\Z}{\mathbb{Z}}
\newcommand{\A}{\mathbb{A}}
\newcommand{\Der}{\operatorname{Der}}
\newcommand{\diag}{\operatorname{diag}}
\newcommand{\esssup}{\operatorname{ess\; sup}}
\newcommand{\inner}[1]{\langle #1 \rangle}

\newcommand{\pt}{\partial}

\newcommand{\dbar}{\bar{\partial}}

\newcommand{\cB}{\mathcal{B}}

\newcommand{\cD}{\mathcal{D}}
\newcommand{\cH}{\mathcal{H}}
\newcommand{\cJ}{\mathcal{J}}
\newcommand{\cG}{\mathcal{G}}
\newcommand{\cQ}{\mathcal{Q}}
\newcommand{\cO}{\mathcal {O}}

\newcommand{\tpsi}{\tilde\psi}

\newcommand{\Hdot}{H_0}
\newcommand{\h}{H}
\newcommand{\hdot}{H_0}

\newcommand{\aaaa}{a}
\newcommand{\wh}{\widehat}
\newcommand{\uu}{u}
\newcommand{\vv}{v}
\newcommand{\GH}{g}
\newcommand{\toe}{\tau}

\newcommand{\beqn}{\begin{equation}}
\newcommand{\neqn}{\end{equation}}

\newcommand{\Carl}{\operatorname{Carleson}}
\newcommand{\G}{\operatorname{G}}
\renewcommand{\Re}{\operatorname{Re}}
\renewcommand{\Im}{\operatorname{Im}}
\renewcommand{\vec}[1]{{\bf #1}}
\newcommand{\Mod}{\operatorname{Mod}}
\newcommand{\Modloc}{\Mod^\text{loc}}
\newcommand{\Etwo}{E^2}

\newcommand{\al}{\alpha}
\newcommand{\be}{\beta}
\newcommand{\de}{\delta}
\newcommand{\ga}{\gamma}
\newcommand{\eps}{\varepsilon}
\newcommand{\la}{\lambda}
\newcommand{\si}{\sigma}

\newcommand{\sm}{\setminus}
\newcommand{\wt}{\widetilde}

\newcommand{\Ga}{\Gamma}
\newcommand{\Om}{\Omega}

\newcommand{\supp}{\operatorname{supp}}
\newcommand{\Hs}{\mathcal{H}^s}
\newcommand{\Hsi}{\mathcal{H}^\sigma}
\newcommand{\Hone}{\mathcal{H}^1}
\newcommand{\Hscont}{{\mathcal{H}}^s_{\rm cont}}
\newcommand{\Honecont}{{\mathcal{H}}^1_{\rm cont}}

\newcommand{\Omc}{\Om^c}
\newcommand{\LtwomuH}{L^2(\mu, \Hdot)}


\title{Spectral analysis of 2D outlier layout}

\author{Mihai Putinar}
\address[mputinar@math.ucsb.edu, mihai.putinar@ncl.ac.uk]{University of California at Santa Barbara, CA,
USA and Newcastle University, Newcastle upon Tyne, UK}

\begin{abstract} Thompson's partition of a cyclic subnormal operator into normal and completely non-normal components is combined with a non-commutative calculus for hyponormal operators for separating outliers from the cloud, in rather general point distributions in the plane. The main result provides exact transformation formulas from the power moments of the prescribed point distribution into the moments of the uniform mass carried by the cloud. The proposed algorithm solely depends on the Hessenberg matrix associated to the original data. The robustness of the algorithm is reflected by the insensitivity of the output under trace class, or by a theorem of Voiculescu, under certain Hilbert-Schmidt class, additive perturbations of the Hessenberg matrix.

\end{abstract}

\subjclass[2010]{
Primary: 47B20; Secondary 33C45, 44A60, 47N30.}

\keywords{
Subnormal operator, orthogonal polynomial, trace formula, outlier, cloud, polynomial approximation, Christoffel-Darboux kernel.}

\maketitle

\section{Introduction}

The Lebesgue decomposition of a positive measure supported on the real line or the circle and 
the spectral analysis of symmetric or unitary Hilbert space linear operators cannot be dissociated. We take for granted nowadays 
operator arguments and Hilbert space geometry features of scattering theory, dynamical systems or approximation schemes, all generally
reflecting the fine structure of some underlying spectral measure.

The association of Lebesgue decomposition of measures in higher dimensions with spectral analysis is much less studied and developed.
It naturally appeared in perturbation theory questions
\cite{Voiculescu-1979,Voiculescu-1980,Makarov-1986} and it also proved to be essential in some elaborate classification results concerning 
classes of non-normal operators \cite{Pincus-1968,Helton-Howe-1975,Thomson-1991}. In the present note we exploit such
a classical, but not widely circulated chapter of modern operator theory with the specific aim at bringing forward an abstract tool with some statistical flavor, namely
detecting ``outliers'' (that is the discrete part) from the ``cloud'' (the continuous, or 
better, 2D-absolutely continuous part) of a measure in the real plane. From the mathematical point of view the ``subnormal dissection'' of planar measures we 
propose is more convoluted, and hence more interesting. We are well aware of the ambiguity of terminology when mentioning outliers \cite{Hawkins-1980}.

The case of two dimensions is special, not last because two real coordinates can be arranged into a single complex variable. Also the spectrum, broadly understood as the tangible numerical support of a linear transform, 
is contained in the complex plane, as well as its localized resolvents, usually interpreted as generalized Cauchy transforms of innate data. This immediately leads to the "black box" approach of restoring the whole from indirect measurements; we tacitly adopt such a perspective, familiar in more applied sciences.

The mathematical construct we propose can be summarized as follows. A compactly supported, positive measure $\mu$ in the complex plane is given. The closure of
complex polynomials in the associated Lebesgue $L^2$-space is a Hilbert space whose inner product solely dependens on the {\it real} power moments of it.
 The multiplier $S$ by the complex variable on this space is a cyclic subnormal operator whose spectrum contains the support of the generating measure. A landmark result of Thomson \cite{Thomson-1991} decomposes this subnormal operator into orthogonal matrix blocks. One of these blocks, the normal component, gathers the point masses and some singular parts of $\mu$. The other blocks are irreducible and fill with their spectrum some of the connected components of the complement of the support of
$\mu$. They provide a dissection of $\mu$ into mutual disjoint essential parts. In order to constructively identify these continuous components of $\mu$ we convert the moment data following a non-commutative calculus. Central to this step is Helton and Howe formula which relates traces of commutators of non-commutative polynomials in $S$ and $S^\ast$ to the principal function of $S$, a spectral invariant proposed by Pincus half a century ago \cite{Pincus-1968,Carey-Pincus-1981}. Trace class estimates in this situation go back to Berger and Shaw \cite{Berger-Shaw-1974} and Voiculescu \cite{Voiculescu-1980'}. Finally, the conditioned moments are organized into an exponential transform which plays the role of ``equilibrium potential'' of the possibly thickened, continuous part of $\mu$. The exponential transform is a superharmonic function decreasing on the complement from the value $1$ at infinity, to the value $0$, and behaving close to the boundary of these components as euclidean distance. See \cite{Gustafsson-Putinar-2017} for a recent survey of basic properties of the exponential transform. All in all, putting aside technical ingredients, we offer to the practitioner an algorithm for separating a simply connected cloud, or a union of of simply connected clouds, from scattered outliers, which can be points, non-closed curves or even more complicated "thin" shapes; all in terms of moment 
data. The complex orthogonal polynomials and the associated Hessenberg matrix representing $S$ naturally enter into computations. A different non-commutative perspective on outlier identification recently appeared in  \cite{Belinschi-2017} for a rather different path.

Omnipresent in our study, although not always explicit, are bounded point evaluations and reproducing kernels associated to some functional Hilbert spaces. For $L^2$ spaces of polynomials of a prescribed degrees, these are the well known Christoffel-Darboux kernels. In the classical one variable setting, the fine analysis and in particular asymptotics of these kernels provide the main separation tool between the 1D-absolute continuous part and singular part of a measure. In this direction an informative account is Nevai's eulogy of Geza Freud work \cite{Nevai-1986}. The ultimate results, with far reaching consequences to approximation theory and beyond, are due to Totik and collaborators \cite{Totik-2000,Mate-Nevai-Totik-1991}.  For a novel application to ergodic theory see \cite{Korda-Mezic-Putinar-2018}.
An operator point of view to the same topics, with unexpected applications to spectral theory, was recently advocated by Simon \cite{Simon-2008,Simon-2009,Simanek-2012}. 
Not surprising, the multivariate analog of the Christoffel-Darboux kernel analysis is only nowadays slowly developing \cite{Bos-DelleVecchia-Mastroiani-1998,Bloom-Levenberg-2008,Kroo-Lubinsky-2013a,Kroo-Lubinsky-2013b,BPSS-2018}. On this ground, potential benefits to the statistics of point distributions are emerging \cite{Lasserre-Pauwels-2019,BPSS-2018,Lasserre-Pauwels-Putinar-2018}. 

The present note remains at the purely theoretical level. Computational and numerical aspects will be addressed in a forthcoming article.
Having in mind such a sequel and its potential wider audience prompted us to balance the text and briefly comment a rather comprehensive bibliography.

\section{Thomson's partition of a measure}

Let $H$ be a separable, complex Hilbert space. A linear bounded operator $S \in {\mathcal L}(H)$ is called {\it subnormal} if there exists a larger
Hilbert space $H \subset K$ and a normal operator $N \in {\mathcal L}(K)$, such that $N$ leaves invariant $H$ and $N|_H = S$. Subnormal operators 
are well studied, mainly with function theory tools, as the quintessential example is offered by multiplication by the complex variable on a Hilbert space of analytic functions.
A subnormal operator $S$ is called irreducible if it cannot be decomposed into a direct sum $S = S_1 \oplus S_2$ of non-trivial operators.
Under a canonical minimality condition one proves that the spectrum of the normal extension is contained in the spectrum of the original operator:
$$ \sigma(N) \subset \sigma(S),$$
the difference consisting in filling entire bounded ``holes" of the complement, that is
every $U \subset \C \setminus \sigma (N)$  bounded and connected satisfies either $U \cap \sigma(S) = \emptyset$ or $U \subset \sigma(S).$
The authoritative monograph \cite{Conway-1991} accurately exposes the basics of subnormal operator theory. In the sequel we freely use some 
standard terminology (essential spectrum, index, reproducing kernel, trace, ...) well explained there in a unifying context.

A cyclic subnormal operator is necessarily of the following type. Let $\mu$ be a compactly supported, positive Borel measure on the complex plane $\C$. The multiplier
$S_\mu = M_z$ acting on the closure $P^2(\mu)$ of polynomials in Lebesgue space $L^2(\mu)$ is obviously subnormal (the minimal normal extension is $N = M_z$ acting on $L^2(\mu)$)
and has the constant function ${\mathbf 1}$ as a cyclic vector. If the measure $\mu$ is not finitely atomic, then one can speak without ambiguity of the basis
of $P^2(\mu)$ formed by the orthonormal polynomials
$$ P_n(z) = \gamma_n z^n + \ldots \in \C[z], \ \ n \geq 0,$$
where $\gamma_n >0$ and
$$ \langle P_k, P_j \rangle_{2,\mu} = \delta_{jk}, \ \ j,k \geq 0.$$
The spectrum of $N$ is equal to the closed support of $\mu$, while the spectrum of $S_\mu$ contains ${\rm supp} (\mu)$ plus some ``holes" of its complement.

\begin{theorem}[Thomson] Let $\mu$ be a positive Borel measure, compactly supported on $\C$. There exists a Borel partition $\Delta_0, \Delta_1, \ldots $
of the closed support of $\mu$ with the following properties:
\bigskip 

1) $P^2(\mu) = L^2(\mu_0) \oplus P^2(\mu_1) \oplus \ldots$, where $\mu_j = \mu|_{\Delta_j}, \ j \geq 0;$
\bigskip

2) Every operator $S_{\mu_j}, \ j \geq 1,$ is irreducible with spectral picture:
$$ \sigma(S_{\mu_j}) \setminus \sigma_{\rm ess}(S_{\mu_j}) = G_j,\ \ {\it simply \ connected},$$
and
$$ {\rm supp} \mu_j \subset \overline{G_j}, \ \ j \geq 1;$$
\bigskip

3) If $\mu_0 = 0$, then any element $f \in P^2(\mu)$ which vanishes $[\mu]$-a.e. on $G = \cup_j G_j$ is identically zero.

\end{theorem}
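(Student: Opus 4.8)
The plan is to read off the partition from the lattice of reducing subspaces of the cyclic subnormal operator $S_\mu = M_z$ on $P^2(\mu)$, and to describe each summand analytically through bounded point evaluations. First I would peel off the normal part: general subnormal theory gives a largest reducing subspace on which $S_\mu$ acts normally, and since $\mathbf 1$ is cyclic and $N = M_z$ on $L^2(\mu)$ is the minimal normal extension, this subspace is reductive and coincides with the locus where polynomials are already dense in the local $L^2$; the goal is to realize it as $L^2(\mu_0)$ with $\mu_0 = \mu|_{\Delta_0}$ for a Borel set $\Delta_0 \subset \supp\mu$. On its orthogonal complement $S_\mu$ is completely non-normal, and it remains to split this pure part into irreducible $P^2$-blocks attached to Borel pieces $\Delta_j$ of the support.

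The analytic heart of the argument is Thomson's existence theorem for (analytic) bounded point evaluations: whenever $P^2(\nu) \neq L^2(\nu)$, the set of analytic bounded point evaluations is a nonempty open set, the functionals $p \mapsto p(\lambda)$ are bounded and vary analytically in $\lambda$ there, and $P^2(\nu)$ is thereby realized as a space of analytic functions that agree $\nu$-a.e. with their $L^2$ representatives where $\nu$ has positive area density. Granting this, I expect the single genuinely hard step to be the proof of the abpe theorem itself. Its proof proceeds by a dyadic decomposition of the plane into squares, a coloring scheme sorting squares as ``heavy'' or ``light'' according to the interplay between the mass of $\nu$ and analytic capacity, and Vitushkin localization of the Cauchy transform together with Davie--Melnikov capacity estimates; a combinatorial induction over generations of squares then converts failure of polynomial density into a quantitative lower bound $|p(\lambda)| \le C\|p\|_\nu$. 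Reproducing these capacitary and combinatorial estimates is the real obstacle, while the remainder of the theorem is soft structure theory.

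With the abpe theorem in hand I would index the pure summands by the connected components $G_j$ of the abpe set, choosing $\Delta_j$ so that $P^2(\mu_j)$, $\mu_j = \mu|_{\Delta_j}$, is the corresponding reducing block. Irreducibility of $S_{\mu_j}$ then follows because the eigenvectors $k_\lambda$ of $S_{\mu_j}^\ast$, $\lambda \in G_j$, span $P^2(\mu_j)$, so any reducing subspace would induce a clopen splitting of the connected set $G_j$. The spectral picture is Fredholm bookkeeping: for $\lambda \in G_j$ the operator $S_{\mu_j} - \lambda$ is bounded below with range of codimension one (the reproducing kernel spans the cokernel), hence $\lambda \in \sigma(S_{\mu_j}) \setminus \sigma_{\rm ess}(S_{\mu_j})$ with index $-1$, giving $\sigma(S_{\mu_j}) \setminus \sigma_{\rm ess}(S_{\mu_j}) = G_j$, while $\supp\mu_j \subset \overline{G_j}$ since points off $\overline{G_j}$ carry no mass compatible with the analytic realization. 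Simple connectivity of $G_j$ I would extract from the index being constantly $-1$ on $G_j$: a bounded complementary hole would force either an index jump or a second analytic component, contradicting connectedness and the maximality of the abpe region in the sense of the Brennan--Trent theory.

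Finally, part (3) is a uniqueness statement in the purely non-normal case $\mu_0 = 0$. Here every $f \in P^2(\mu)$ decomposes as $f = \bigoplus_j f_j$ with $f_j \in P^2(\mu_j)$, and each $f_j$ carries an analytic realization $\hat f_j$ on $G_j$ agreeing with $f_j$ in the $\mu_j$-a.e. sense on $G_j$. If $f$ vanishes $[\mu]$-a.e. on $G = \bigcup_j G_j$, then each $\hat f_j$ vanishes $\mu_j$-a.e., hence on a set of positive area, so by analyticity $\hat f_j \equiv 0$ on $G_j$; since $f_j \mapsto \hat f_j$ is injective (its vanishing forces $f_j \perp k_\lambda$ for all $\lambda \in G_j$, and these kernels span), we get $f_j = 0$ in $P^2(\mu_j)$, and summing yields $f = 0$. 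The one point requiring care is precisely the a.e.\ agreement of the analytic realization with the $L^2$ representative, which is again supplied by the abpe theorem.
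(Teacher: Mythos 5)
The paper does not prove this theorem at all: it is quoted from Thomson's 1991 Annals paper and the text says only ``The proof appeared in \cite{Thomson-1991}.'' So there is no in-paper argument to compare against, and the only question is whether your proposal stands on its own as a proof. It does not. You have correctly identified the architecture of Thomson's actual argument --- peel off the normal summand, prove the existence of analytic bounded point evaluations whenever $P^2(\nu)\neq L^2(\nu)$, and then do soft Fredholm and reducing-subspace bookkeeping --- but you explicitly defer the one step that constitutes essentially the entire content of the theorem. The existence of abpe's via the dyadic coloring scheme, Vitushkin localization, and the Davie--Melnikov analytic-capacity estimates is not a lemma you may ``grant'': it is the theorem. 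Everything else in your outline is a corollary of it, so what you have written is a proof of the easy implications conditional on the hard one.

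Two of the ``soft'' steps are also glossed more than they should be. First, realizing the normal summand as $L^2(\mu|_{\Delta_0})$ and each pure summand as $P^2(\mu|_{\Delta_j})$ for \emph{Borel restrictions} of $\mu$ is itself part of the theorem's content; the lattice of reducing subspaces of a cyclic subnormal operator does not hand you subsets $\Delta_j$ of the support for free, and identifying the reducing projections with multiplications by characteristic functions requires an argument. Second, your derivation of simple connectivity of $G_j$ from ``the index is constantly $-1$, so a hole would force an index jump'' is not a proof: constancy of the index on a connected open set says nothing about bounded components of its complement. In Thomson's paper the simple connectivity and the inclusion $\mathrm{supp}\,\mu_j\subset\overline{G_j}$ come out of the same quantitative capacity analysis that produces the abpe's, not out of Fredholm theory. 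Your part (3) argument (analytic realization vanishes on a set of positive area, hence identically, hence $f_j\perp k_\lambda$ for all $\lambda$, hence $f_j=0$) is fine once the abpe machinery is in place.
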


The proof appeared in \cite{Thomson-1991}, even for $L^p$ spaces, $1 \leq p < \infty.$  The immediate relevance of Thomson's Theorem for the general theory of 
subnormal operators is already discussed in \cite{Conway-1991} (published almost simultaneously with the original article). We confine to comment briefly the statement.

The operator $S_{\mu_0} = M_z \in {\mathcal L}(L^2(\mu_0))$ is the normal component of $S_\mu$. If $S_\mu$ is not normal, then at least one of the summands $S_{\mu_j}, \ j \geq 1,$
is non-trivial. There can be finitely many, or at most countably many such irreducible summands. The spectral picture described in part 2) of the theorem means that the adjoint
of every operator $S_{\mu_j}$ admits a continuum of eigenvalues of multiplicity one, labelled by the simply connected open set $G_j$:
$$ \lambda \in G_j \ \ \Rightarrow [\ker (S_{\mu_j} -\lambda) = 0, \ \ \ker(S_{\mu_j}^\ast - \overline{\lambda}) = 1],$$
and the range of $S_{\mu_j}-\lambda$ is closed. Moreover, the corresponding eigenvectors span $P^2(\mu_j)$, to the extent that this functional Hilbert space possesses a
reproducing kernel. To be more specific, for every $\lambda \in G_j$ the corresponding point evaluation is bounded as a linear functional on $P^2(\mu_j)$, and hence on
$P^2(\mu)$:
$$ \Lambda^{\mu_j}(\lambda) : = \inf \{ \|p \|^2_{\mu_j}; p \in \C[z], \ p(\lambda) =1\} >0.$$
Above $\Lambda^{\tau}(\lambda)$ is the {\it Christoffel function} associated to the measure $\tau$ and point $\lambda$.
This means that the support of $\mu_j$ must disconnect the simply connected open set $G_j$ from the interior points of its complement.
We will depict some examples in the last section.

The specific aim of the present note is to extract from the subnormal operator $S_\mu$ its non-normal part and relate this operation to the splitting, henceforth called
{\it Thompson's partition}, of the 
original measure $\mu = \mu_0 + [\mu_1 + \mu_2 + \ldots].$ At this stage we turn the page and invoke some distinctive results concerning a wider class of linear operators.
We will give details including terminology in the proof of the following theorem. Below ${\rm dA}$ stands for Lebesgue two dimensional measure and $[\cdot, \cdot]$ denotes the commutator of two operators.

\begin{theorem} Let $\mu = \mu_0 + \mu_1 + \mu_2 + \ldots$ be Thompson's partition of a positive measure $\mu$ with compact support on $\C$. Let $S = S_\mu$
be the corresponding subnormal operator with irreducible parts having spectrum $\sigma(S_{\mu_j}) = \overline{G_j}, \ j \geq 1.$ 

For every pair of non-negative integers $k,\ell$ the following trace exists and equals a moment integral:
\begin{equation}\label{newmoments}
{\rm Tr} [ (S^\ast)^{k+1}, S^{\ell+1} ] = \frac{(k+1)(\ell+1)}{\pi} \sum_j \int_{\overline{G_j}} \overline{z}^k z^{\ell}{\rm dA}(z).
\end{equation}

\end{theorem}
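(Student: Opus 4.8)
The plan is to route the computation through Pincus's principal function and the Helton--Howe trace formula, which is precisely the non-commutative calculus alluded to in the introduction. First I would secure the trace-class hypothesis. Since $S = S_\mu$ is subnormal it is in particular hyponormal, and it is cyclic (the constant function ${\mathbf 1}$ is a cyclic vector), so the Berger--Shaw theorem \cite{Berger-Shaw-1974} guarantees that the self-commutator $[S^\ast, S]$ is trace class, with ${\rm Tr}\,[S^\ast, S] \le \frac{1}{\pi}\,{\rm Area}(\sigma(S))$. Consequently every commutator $[p(S,S^\ast), q(S,S^\ast)]$ of polynomials in $S$ and $S^\ast$ is again trace class, so in particular the trace on the left-hand side of \eqref{newmoments} exists. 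Moreover the Pincus principal function $g_S \in L^1(\C, {\rm dA})$ is well defined, and the Helton--Howe formula \cite{Helton-Howe-1975, Carey-Pincus-1981} asserts that for all polynomials $p, q$ in $z$ and $\bar z$,
\begin{equation*}
{\rm Tr}\,[p(S, S^\ast), q(S, S^\ast)] = \frac{1}{\pi}\int_{\C}\left(\frac{\partial p}{\partial \bar z}\frac{\partial q}{\partial z} - \frac{\partial p}{\partial z}\frac{\partial q}{\partial \bar z}\right) g_S\, {\rm dA}.
\end{equation*}

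Next I would specialize this identity to the symbols $p(z,\bar z) = \bar z^{k+1}$, representing $(S^\ast)^{k+1}$, and $q(z,\bar z) = z^{\ell+1}$, representing $S^{\ell+1}$. The bracket collapses to a single surviving term: $\partial p / \partial \bar z = (k+1)\bar z^{k}$ and $\partial q/\partial z = (\ell+1) z^{\ell}$, while $\partial p/\partial z = \partial q/\partial\bar z = 0$, so the Jacobian factor is exactly $(k+1)(\ell+1)\,\bar z^{k} z^{\ell}$. This reduces the claim to the identity
\begin{equation*}
{\rm Tr}\,[(S^\ast)^{k+1}, S^{\ell+1}] = \frac{(k+1)(\ell+1)}{\pi}\int_{\C}\bar z^{k} z^{\ell}\, g_S\, {\rm dA},
\end{equation*}
and hence to the identification of $g_S$ with $\sum_j {\mathbf 1}_{\overline{G_j}}$.

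The computation of the principal function is the crux. Here I would invoke the additivity of $g$ under orthogonal direct sums together with Thomson's decomposition $S = S_{\mu_0} \oplus \bigoplus_{j\ge 1} S_{\mu_j}$: the normal summand $S_{\mu_0}$ has vanishing principal function, so $g_S = \sum_{j\ge 1} g_{S_{\mu_j}}$. For each irreducible subnormal block I would use the fundamental relation between the principal function and the Fredholm index, namely $g_{S_{\mu_j}}(\lambda) = -\,{\rm ind}(S_{\mu_j} - \lambda)$ on the complement of the essential spectrum. The spectral picture in part 2) of Thomson's theorem says that for $\lambda \in G_j$ one has $\ker(S_{\mu_j}-\lambda) = 0$, $\dim\ker(S_{\mu_j}^\ast - \overline\lambda) = 1$, and closed range, so ${\rm ind}(S_{\mu_j}-\lambda) = -1$ and therefore $g_{S_{\mu_j}} = 1$ on $G_j$ and $0$ off $\overline{G_j}$. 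Summing the contributions and noting that each boundary $\partial G_j$ carries no area, I obtain $g_S = \sum_j {\mathbf 1}_{\overline{G_j}}$, which upon substitution yields \eqref{newmoments}.

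The main obstacle I anticipate is bookkeeping rather than conceptual: matching the several sign and normalization conventions that circulate for the Helton--Howe formula (the factor $\tfrac{1}{\pi}$ versus $\tfrac{1}{2\pi i}$ and the orientation of ${\rm dA}$ relative to $\mathrm dz \wedge \mathrm d\bar z$), and confirming that the operator-ordering ambiguity in forming $p(S, S^\ast)$ and $q(S, S^\ast)$ alters these operators only by trace-class terms of vanishing trace, so that it leaves the final formula untouched. A secondary point requiring care is the precise justification that the principal function of a cyclic subnormal operator is genuinely the integer-valued index function supported on $\bigcup_j \overline{G_j}$, for which I would lean on the established Carey--Pincus theory of the principal function.
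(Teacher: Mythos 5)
Your proposal follows essentially the same route as the paper's proof: Berger--Shaw for the trace-class self-commutator, the Helton--Howe trace formula specialized to $p=\bar z^{k+1}$ and $q=z^{\ell+1}$, and the block-by-block identification of the principal function via Thomson's spectral picture and the Fredholm index formula. The only minor divergence is at the boundary of each chamber: where you dismiss $\partial G_j$ as an area-null set (not obvious for a general simply connected $G_j$), the paper instead pins down $g_{S_{\mu_j}}=\chi_{\overline{G_j}}$ exactly by combining the Carey--Pincus integrality theorem with the identity $\operatorname{supp} g_{S_{\mu_j}}=\sigma(S_{\mu_j})=\overline{G_j}$, thereby sidestepping any assumption on the area of $\partial G_j$.
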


\begin{proof} The subnormal operator $S$ is {\it hyponormal}, that
is its self-commutator is non-negative:
$$ [S^\ast,S] = S^\ast S - S S^\ast  \geq 0.$$
Since $S$ is also cyclic, a theorem due to Berger and Shaw \cite{Berger-Shaw-1974} asserts that the the above self-commutator is also trace class:
\begin{equation}\label{BS}
{\rm Tr} [ S^\ast, S] < \infty.
\end{equation}
Due to non-negativity ${\rm Tr} [ S^\ast, S] =0$ if and only if $S$ is normal. A different proof of Berger-Shaw Theorem is due to Voiculescu \cite{Voiculescu-1980'}, where
the modulus of quasi-triangularity and other perturbation theory concepts enter into discussion.

Hyponormal operators with trace-class self-commutators are modeled by singular integrals (with Cauchy kernel singularity) on Lebesgue space  on the real line. Early on, starting from such functional models Pincus \cite{Pincus-1968}
has identified a spectral invariant called the {\it principal function}. A decade of astounding discoveries led to the following trace formula, which can also be taken as a definition
of the principal function. Let $T \in {\mathcal L}(H)$ be a hyponormal operator with trace-class self-commutator. There exists a function $g_T \in L^1_{\rm comp} (\C,{\rm dA})$ with compact support,
satisfying:
\begin{equation}\label{HH}
{\rm Tr}[p(T,T^\ast), q(T,T^\ast)] = \frac{1}{\pi} \int_{\C} ( \frac{\partial p}{\partial \overline{z}} \frac{\partial q}{\partial {z}} - \frac{\partial q}{\partial \overline{z}} \frac{\partial p}{\partial {z}} ) {\rm dA}(z).
\end{equation}
Above $p, q \in \C[z,\overline{z}]$ are polynomials, and the order of factors $T$, $T^\ast$ in the functional calculus $p(T,T^\ast)$ does not affect the trace, due to relation (\ref{BS}).
Originally, Carey and Pincus have defined $g_T$ via a multiplicative commutator determinant, in its turn inspired by the perturbation determinant in the 1D theory of the phase shift.
The trace formula above appeared in the works of Helton and Howe \cite{Helton-Howe-1975}. Proofs and historical details can be found in the monograph \cite{Martin-Putinar-1989}.
Important for our note is the observation that, for an irreducible hyponormal, but not normal operator $T$, the support of the principal function coincides with its spectrum:
\begin{equation}\label{suppg}
{\rm supp } \  g_T = \sigma(T).
\end{equation}
For a proof see pg. 243 of \cite{Martin-Putinar-1989}. 

To give a sense of principal function, for a point $\lambda$ disjoint of the essential spectrum of $T$ one finds the Fredholm index formula:
\begin{equation}\label{index}
g_T(\lambda) = - {\rm ind} (T-\lambda) = \dim \ker(T^\ast - \overline{\lambda}) - \dim \ker(T - {\lambda}).
\end{equation}
The principal function enjoys a series of functoriality properties which are inherited from its curvature type definition (\ref{HH}). 

Another relevant theorem due to Carey and Pincus \cite{Carey-Pincus-1981} asserts that the principal function of a subnormal operator is integer valued. 

Returning to Thompson's partition setting we can assemble the above observations: consider an irreducible component $S_j = S_{\mu_j}, \ j \geq 1,$ of $S$. According to the index formula (\ref{index}),
$g_{S_j}(\lambda)= 1$ for every $\lambda \in G_j$. Moreover, Thompson's Theorem yields $\sigma(S_j) = \overline{G_j}$, and on the other hand $\sigma(S_j) = {\rm supp} \ g_{S_j}.$
According to the cited Carey and Pincus Theorem we infer
$$ g_{S_j} = \chi_{\overline {G_j}}.$$
Thus, for a fixed integer $j \geq 1$ Helton and Howe formula reads
$${\rm Tr} [ (S_j^\ast)^{k+1}, S_j^{\ell+1} ] = \frac{(k+1)(\ell+1)}{\pi}  \int_{\overline{G_j}} \overline{z}^k z^{\ell}{\rm dA}(z), \ \ k,\ell \geq 0.$$

Since $S$ is a direct sum of these irreducible operators and $S_0$ does not contribute to the trace of commutators, the statement is proved.

\end{proof}

By exploiting another feature of the principal function, namely Berger's cyclic multiplicity inequality
$$ g_T \leq {\rm mult} (T),$$
see \cite{Berger-1981}, we derive the following notable property of the open chambers $G_j$.

\begin{corollary} In the conditions of Thompson's Theorem, the area measure of every
intersection $\overline{G_j} \cap \overline{G_k}, j \neq k,$ is zero.
\end{corollary}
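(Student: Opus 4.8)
The plan is to combine Berger's cyclic multiplicity inequality $g_T \leq {\rm mult}(T)$ with the additivity of the principal function across orthogonal direct summands. First I would record the decisive structural fact: $S = S_\mu$ is cyclic, with the constant function ${\mathbf 1}$ generating $P^2(\mu)$ under the action of $S$, so ${\rm mult}(S) = 1$. Berger's inequality then forces $g_S \leq 1$ almost everywhere with respect to area measure, and the entire argument reduces to identifying $g_S$ explicitly.

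To compute $g_S$ I would invoke additivity of the principal function under direct sums, a functoriality property immediate from the Helton-Howe trace formula \eqref{HH}: if $T = T_1 \oplus T_2$, then every functional-calculus expression $p(T,T^\ast)$ is block diagonal, so the commutator $[p(T,T^\ast), q(T,T^\ast)]$ is block diagonal and its trace splits as a sum over the blocks; by uniqueness of the integral representation this yields $g_T = g_{T_1} + g_{T_2}$ a.e. Applied to Thompson's partition $S = S_{\mu_0} \oplus S_{\mu_1} \oplus S_{\mu_2} \oplus \cdots$, the normal summand contributes nothing, $g_{S_{\mu_0}} = 0$, while the proof of the preceding theorem gives $g_{S_{\mu_j}} = \chi_{\overline{G_j}}$ for each $j \geq 1$. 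Hence $g_S = \sum_{j \geq 1} \chi_{\overline{G_j}}$ as an element of $L^1(\C, {\rm dA})$.

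Combining the two steps gives $\sum_{j \geq 1} \chi_{\overline{G_j}} = g_S \leq {\rm mult}(S) = 1$ almost everywhere. If some pairwise intersection $\overline{G_j} \cap \overline{G_k}$ with $j \neq k$ had positive area, then on that set the left-hand sum would take the value at least $2$, contradicting the bound; therefore every such intersection is area-null, which is exactly the assertion.

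The one point requiring genuine care, and the step I expect to be the main obstacle, is the passage to the \emph{infinite} direct sum—both the $L^1$-convergence of $\sum_{j} \chi_{\overline{G_j}}$ and the termwise legitimacy of the additivity of $g$. I would control this by appealing to Berger-Shaw \eqref{BS}, which already gives ${\rm Tr}[S^\ast, S] < \infty$ for the cyclic $S$. Since the self-commutator is block diagonal, this finite trace decomposes as $\sum_j {\rm Tr}[S_{\mu_j}^\ast, S_{\mu_j}] = \frac{1}{\pi}\sum_j |\overline{G_j}|$, so the total area of the chambers is finite and the series of characteristic functions converges absolutely in $L^1(\C, {\rm dA})$. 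This renders the direct-sum additivity rigorous and reduces the remaining difficulty to routine bookkeeping rather than anything conceptual.
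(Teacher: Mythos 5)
Your proposal is correct and follows essentially the same route as the paper: the paper's proof is precisely the chain $g_S = g_{S_1} + g_{S_2} + \cdots \leq \mathrm{mult}(S) = 1$ a.e.\ together with $g_{S_j} = \chi_{\overline{G_j}}$ from the preceding theorem. You merely make explicit the additivity of the principal function over direct summands and the $L^1$-convergence via Berger--Shaw, details the paper leaves implicit.
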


\begin{proof}
Indeed,
$$ g_S = g_{S_1} + g_{S_2} + \ldots \leq 1$$
and
$$ g_S \leq 1, \ \ {\rm d A}-a.e.$$
While $g_{S_j} = \chi_{\overline{G_j}}, \ \ j \geq 1.$
\end{proof}

An array of function theoretic results, some providing answers to long standing open questions, stream from Thompson's partition of a measure and the structure of a cyclic subnormal operator, cf. Chapter VIII of \cite{Conway-1991}. An elaborate  analysis of the limiting  values of elements of $P^2(\mu)$ on the boundary of each chamber $G_j$ is pursued in \cite{ARS-2009}, with applications to the structure of invariant subspaces of the operator blocks $S_j$.

\section{Shape reconstruction} In this section we focus on the special nature of the power moments derived from the commutator formula (\ref{newmoments}) by sketching some known
reconstruction algorithms. The specific nature of the generating measure of the new moments (absolutely continuous, with a bounded weight with respect to Lebesgue 2D measure) simplifies and enhances the reconstruction and approximation procedures.

\subsection{}

First  we resort again to the theory of hyponormal operators, this time subject to a more restrictive self-commutator constraint. Details
covering technical aspects touched below and additional bibliographical comments can be found in the recent lecture notes \cite{Gustafsson-Putinar-2017}.

The starting point is Pincus' bijective correspondence between ``shade functions" $g \in L^1_{\rm comp}(\C, {\rm dA}), \ 0 \leq g \leq 1,$  and irreducible hyponormal operators
$T \in {\mathcal L}(H)$ with rank one self-commutator:
$$ [T^\ast, T] = \xi \langle \cdot, \xi \rangle.$$
The explicit formula linking the two classes is:
\begin{equation}\label{det}
\det [ (T-w) (T^\ast-\overline{z}) (T-w)^{-1}(T^\ast-\overline{z})^{-1}] = $$ $$ 1- \langle (T^\ast - \overline{z})^{-1}\xi, (T^\ast - \overline{w})^{-1}\xi \rangle = $$ $$
\exp ( \frac{-1}{\pi} \int_\C \frac{ g(\zeta) {\rm dA}(\zeta)}{(\zeta-w)(\overline{\zeta}-\overline{z})}).
\end{equation}
Above $z,w$ are originally outside the spectrum of $T$, equal to the support of $g$, but the second identity can be extended to the whole $\C^2$. As a matter of fact this is the multiplicative analog, and equivalent, of Helton and Howe additive formula, identifying $g$ with the principal function $g_T$ of $T$.

In our specific situation of a positive measure $\mu$ carrying its Thompson's partition, we face, returning to the notation of the preceding section, a characteristic function 
$$ g = g_{S_\mu} = \sum_j \chi_{\overline{G_j}}.$$
Its moments appear in relation (\ref{newmoments}):
$$ 
a_{k\ell} = \int_\C \zeta^k \overline{\zeta}^\ell g(\zeta) {\rm dA}(\zeta), \ \ k,l \geq 0,
$$
and will be organized in the exponential of a formal generating series:
$$ E_g(w,z) = \exp [\frac{-1}{\pi} \sum_{k,\ell=0}^\infty \frac{a_{k\ell}}{w^{k+1} \overline{z}^{\ell+1}}].$$
This is of course the power expansion at infinity of the double Cauchy integral appearing in (\ref{det}).
In general we define
$$ E_g(w,z) =  \exp ( \frac{-1}{\pi} \int_\C \frac{ g(\zeta) {\rm dA}(\zeta)}{(\zeta-w)(\overline{\zeta}-\overline{z})}), \ \ z,w \in \C, \ z\neq w.$$

We recall a few of the properties of the {\it exponential transform} $E_g$:
\begin{enumerate}[(a)]

\item The function $E_g$ can be extended by continuity to $\C^2$ by assuming the value $E_g(z,z) = 1$ whenever $ \int_\C \frac{ g(\zeta) {\rm dA}(\zeta)}{|\zeta-z|^2} = \infty;$

\item The function $E_g(w,z)$ is analytic in $w \in \C \setminus {\rm supp} (g)$ and antianalytic in $z \in \C \setminus {\rm supp} (g)$;

\item The kernel $1-E_g(w,z)$ is positive semi-definite in $\C^2$;

\item The behavior at infinity contains as a first term the Cauchy transform of $g$:
$$ E_g(w,z) = \frac{1}{\overline{z}} [\frac{-1}{\pi} \int_\C \frac{ g(\zeta) {\rm dA}(\zeta)}{\zeta-w} + O(\frac{1}{|z|^2}), \ \ |z| \rightarrow \infty.
$$ 
\end{enumerate}

The case of a characteristic function $g= \chi_\Omega$ of a bounded domain $\Omega$ is particularly relevant for our note. In this case we simply write
$E_\Omega$ instead of $E_g$, and we record the following properties (all proved and well commented in \cite{Gustafsson-Putinar-2017}).

\begin{enumerate}[(i)]

\item The equation 
$$ \frac{\partial E_\Omega(w,z)}{\partial \overline{w}} = \frac{E_\Omega(w,z)}{\overline{w}-\overline{z}}$$
holds for $z \in \Omega$ and $w \in \C \setminus \Omega$;

\item The function $E_\Omega(w,z)$ extends analytically/antianalytically from \\
$(\C \setminus \overline{\Omega})^2$ across real analytic arcs of the boundary of  $\Omega$;

\item Assume $\partial \Omega$ is piecewise smooth. Then $z \mapsto E_\Omega(z,z)$ is a superharmonic function on the complement of $\Omega$,
with value $1$ at infinity, vanishing on $\overline{\Omega}$ and satisfying
$$ E_\Omega(z,z) \approx {\rm dist} (z, \partial \Omega)$$
for $z$ close to $\partial \Omega$;
\end{enumerate}

A Riemann-Hilbert factorization also characterizes $E_\Omega$, see \cite{Gustafsson-Putinar-2017}. The feature which turns the exponential transform into a suitable shape reconstruction from moments tool is 
its rationality  on a class of domains which approximate in Hausdorff distance any planar domain. More specifically, a bounded open set $U$ of the complex plane
is called a {\it quadrature domain} for analytic functions if there is a distribution of finite support $\tau$ in $U$ (combination of point masses and their derivatives), such that
$$ \int_U f {\rm d A} = \tau(f), \ \ f \in L^1_a(U, {\rm d A}),$$
where $L^1_a(U, {\rm d A})$ stands for the space of analytic functions in $U$ which are integrable. Such a quadrature domain has always a real algebraic boundary, even irreducible if
$U$ is connected. The simplest example is of course a disk. Any simply connected quadrature domain is a conformal image of the disk by a rational function. The term was coined by Aharonov and Shapiro for specific function theory purposes, but soon it was realized that quadrature domains are relevant in fluid mechanics, geophysics, integrable systems and operator theory. An informative and accessible survey is \cite{Gustafsson-Shapiro-2005}.

Quadrature domains can be exactly reconstructed from moments via formal algebraic manipulations of the exponential transform. Namely, let $d$ be a fixed integer and let
$(a_{k\ell})_{k,\ell=0}^d,$
be a hermitian matrix of potential moments of a ``shade function" $g(z), 0 \leq g \leq 1,$. Consider the truncated exponential transform
$$ F(w,z) =  \exp [\frac{-1}{\pi} \sum_{k,\ell=0}^d \frac{a_{k\ell}}{w^{k+1} \overline{z}^{\ell+1}}] = $$ $$1 + \sum_{m,n=0}^\infty \frac{b_{mn}}{w^{m+1} \overline{z}^{n+1}}.$$
A necessary and sufficient condition that $(a_{k\ell})_{k,\ell=0}^d$ represent the moments of a quadrature domain is the existence of a monic polynomial $P(z)$ of degree $d$ and a rational function of the form
$$ R_d(w,z) = \frac{\sum_{m,n=0}^{d-1} c_{mn} w^m \overline{z}^n}{P(w) \overline{P(z)}},$$
such that, at infinity
$$ F(w,z) - R(w,z) = O( \frac{1}{w^{d+1} \overline{z}^d},  \frac{1}{w^{d} \overline{z}^{d+1}}).$$
The reader will recognize above a typical 2D Pad\'e approximation scheme. Moreover, for any shade function $g$, the exponential transform
$E_g$ coincides with $E_\Omega$, where $\Omega$ is a quadrature domain if and only if 
$$E_g(w,z) = \frac{\sum_{m,n=0}^{d-1} c_{mn} w^m \overline{z}^n}{P(w) \overline{P(z)}}, \ \ |z|, |w| \gg 1.$$
In this case the zeros of $P$ coincide with the quadrature nodes, while the numerator is the irreducible defining polynomial of the boundary of $\Omega$:
$$ \partial \Omega \subset \{ z \in \C; \ \sum_{m,n=0}^{d-1} c_{mn} z^m \overline{z}^n = 0 \}.$$

The above Pad\'e approximation procedure was proposed for the reconstruction of planar shapes in \cite{GHMP-2000} and it is also fully explained in \cite{Gustafsson-Putinar-2017}.

\subsection{}

One can approximate unions of simply connected domains with smooth boundary without going through the costly non-linear exponential transform. This time one can invoke simply the Christoffel-Darboux kernel and its behavior inside, outside or on the boundary of the set. The necessary estimates for this approach appear in \cite{GPSS-2009}, while a refinement of the method, for non-simply connected sets (an ``archipelago of islands with lakes'') appears in \cite{SSST-2015}. The give only one glimpse in this direction, if $\Omega$ is such an archipelago, and $P_n(z)$ denote the associated complex orthogonal polynomials, then Christoffel's function: 
$$ \Lambda^\Omega_n(\lambda) = \inf \{ \| p \|^2_{2,\Omega}, \ p \in \C[z], \ \deg(p) \leq n, \ p(\lambda)=1\},$$ satisfies:
$$\sqrt{\pi} {\rm dist}(z, \partial \Omega) \leq \sqrt{\Lambda^\Omega_n(z)}  \leq C {\rm dist}(z, \partial \Omega)$$
for $z \in \Omega$, close to $\partial \Omega$. Moreover:
$$  \Lambda^\Omega_n(\lambda) = O(\frac{1}{n}), \ \ \lambda \in \Gamma.$$
In the exterior of $\overline{\Omega}$ Christoffel's function decays exponentially to zero.
On analytic boundaries one have sharp estimates, complemented by some computational analysis and graphical experiments, see again \cite{GPSS-2009,SSST-2015}.

\subsection{}

Finally, if it is a priori given that the boundary of an open set $\Omega$ is given by a single real polynomial $q$ of degree $d$, and $\Omega$ is its sublevel set, then Stokes Theorem
(of geometric measure theory) allows to identify $q$ from all power moments of $\Omega$ of degree less than or equal to $3d$. The additional assumption that $\Omega$ is convex
drops this bound to $2d$. For details see \cite{Lasserre-Putinar-2015}. The same idea is generalized there to the reconstruction of algebraic domains carrying an exponential weight.

\section{Exclusion of outliers} We start this concluding section with a few examples, validating, or showing the limits, of the ideal separation of ``outliers'' from the ``cloud'' we propose.
Thompson's decomposition of a positive Boreal measure in 2D strongly uncouples (in orthogonal Hilbert space directions) the normal part of the multiplier by the complex variable from its pure subnormal part. This will be the first step in our scheme. On simple examples, that is a cloud of uniform area mass versus finitely many atoms, the partition  we propose will do as expected. However, on more sophisticated measures, the decomposition might be counterintuitive.

\begin{example} Two measures supported by the unit circle $\T$ produce the same essential spectrum, yet their Thompson's partition is very different. Specifically, let $d\theta$ denote
arc length, and 
$$ \mu = \sum_{k=1}^\infty \frac{1}{2^k} \delta_{z_k},$$
where $(z_k)$ is an everywehere dense sequence on $\T$.

The operator $S_{d\theta}$ is pure subnormal; this is the unilateral shift acting on Hardy space of the disk. The spectrum is the closure of the unit disk,
$$ \sigma(S_{d\theta}) = \overline{\D},$$
the essential spectrum is the boundary
$$ \sigma_{ess} (S_{d\theta}) = \T,$$
and the principal function is the characteristic function of the disk. The associated exponential transform is
$$ E_\D(w,z) = \exp[ \frac{-1}{\pi} \int_\D \frac{{\rm dA}(\zeta)}{(\zeta-w)(\overline{\zeta}-\overline{z})}] = 1 -\frac{1}{w \overline{z}}, \ \ |z|, |w|>1,$$ and indeed
$z=0$ is the quadrature node of $\D$ and $1-z \overline{z} =0$ is the equation defining the boundary.

On the other hand, $S_\mu$ is a normal operator, with spectrum equal to its essential spectrum
$$ \sigma(S_\mu) = \sigma_{ess}(S_\mu) = \T.$$ 
As a matter of fact, $\mu$ can be any singular measure with respect to arc length, and support equal to the full circle.

\end{example}

\begin{example} We remain in the closed unit disk, and consider after Kriete \cite{Kriete-1979} a measure $\mu$ which inside the disk
is rotationally invariant
$$ \mu |_\D = G(r) r dr d \theta $$
while on the boundary has the form
$$ \mu |_\T = w d\theta + \nu_s,$$
where $G,w$ are positive, continuous weights and $\nu_s$ is singular with respect to arc length. In general, $L^2(\nu_s)$ is a direct summand of $P^2(\mu)$, but wether
$L^2(w d\theta)$ is also a direct summand of $P^2(\mu)$ is a challenging question, touching delicate harmonic analysis chapters; a sufficient condition depending on the growth of the function $G$ and the behavior of the
boundary weight $w$. Without entering into details, we mention that there exists an example (attributed by Kriete to Vol'berg) with $G(r) =1, \  r \in [0,1),$ and weight $w(\theta)$
necessarily satisfying
$$ \int_{-\pi}^\pi \log w(\theta) d \theta = -\infty,$$
so that
$$ P^2(\chi_\D {\rm dA} + w d\theta) = L^2_a(\D) \oplus L^2(w d\theta).$$
Above $L^2_a(\D, {\rm dA}) = P^2(\chi_\D {\rm dA})$ is the Bergman space of the unit disk, that is the space of analytic functions in $\Omega$ which are square summable on $\Omega$.

In such a case, our splitting schema will separate the area measure in the disk from the boundary component $w d\theta$. 
\end{example}

Speaking about the Bergman space of a simply connected domain $\Omega$, classical results of Carleman (1923), then later Markushevich and Farrell (1934) prove that
$$ L^2_a(\Omega, {\rm dA}) = P^2(\chi_\Omega {\rm dA}),$$
for a Jordan domain, respectively for a Carath\'eodory domain. In this situation, Thompson's partition has obviously only one chamber, equal to $\Omega$, and no normal part.
By definition, $\Omega$ is a {\it Carath\'eodory domain} if its boundary equals the boundary of the unbounded connected component of 
the complement of its closure. For example, a Carath\'eodory domain does not have internal slits. The early survey \cite{Mergeljan-1962} is invaluable for information on the contributions of the Russian and Armenian schools to this very topics.

Asking the same completeness of polynomials in Bergman space question beyond Carath\'eodory domains turns out to be very challenging, and interesting. It is not our aim to enter into
inherent technical details. We confine to record a recent outstanding observation of Akeroyd \cite{Akeroyd-2011} stating that there exists a slit $\Gamma$ joining an internal point to a boundary point of the unit disk, so that $ L^2_a(\D \setminus \Gamma, {\rm dA}) = P^2(\chi_{\D\setminus \Gamma} {\rm dA}).$ In general, this is not the case, the relative geometry of the curve $\Gamma$ inside $\D$ altering the completeness of polynomials . See \cite{Brennan-1977} for an authoritative text, containing definitive results.

\begin{example}
To get closer to the aim of this note, we have to mention the weighted Bergman space case also. Let $\Omega$ be a simply connected domain and $w: \Omega \longrightarrow (0,\infty)$ a continuous weight. We denote by $L^2_a(\Omega, w {\rm dA})$ the space of analytic functions in $\Omega$ which are square summable with respect to the measure $w {\rm dA}$
on $\Omega$.
 Early theorems of Hedberg \cite{Hedberg-1968} assert that
$$ L^2_a(\Omega, w {\rm dA}) = P^2(\chi_\Omega w {\rm dA}),$$
if either $w(z) = |f(z)|$, where $f$ is an analytic function in $\Omega$ subject to the growth condition
$$ \int_\Omega (|f|^{-\delta} + |f|^{1+\delta}) {\rm d A} < \infty,$$ for some $\delta>0$, or
$$ L^2_a(\D, (w\circ \phi) {\rm dA}) = P^2(\chi_\D (w\circ \phi) {\rm dA}),$$
where $\phi: \D \longrightarrow \Omega$ is a conformal mapping. For instance a positive weight $w$ making the pull-back on the disk $w \circ \phi$ rotationally invariant is appropriate for polynomial density in the weighted Bergman space. In other terms $w$ is constant on the level sets of the inner Green function of $\Omega$. See \cite{Brennan-1979} for complementary results and history of this completeness problem.
\end{example}

After this lengthy and still sketchy preparation for a possible delimitation of the mathematical meaning of a ``cloud'', we are ready to put forward the announced algorithm.
\bigskip

{\bf Algorithm for separation of outliers from a cloud.} {\it Let $\mu = \chi_\Omega w {\rm dA} + \nu$ be a positive measure, where $\Omega$ is a bounded simply connected domain with piece-wise smooth boundary, $w$ is a positive continuous weight on $\Omega$ and $\nu$ is a finite atomic measure. We assume that complex polynomials are dense in the weighted Bergman space
\begin{equation}\label{Bergman-completeness}
 L^2_a(\Omega, w {\rm dA}) = P^2(\chi_\Omega w {\rm dA})
 \end{equation}
 and the power moments 
 $$ s_{k\ell} = \int_\C z^k \overline{z}^\ell d\mu(z), \ \ k,\ell \geq 0,$$
 are known.
 
\begin{enumerate}
\item Compute the associated complex orthogonal polynomials $P_k(z), \ k \geq 0,$ and the corresponding truncated Christoffel-Darboux kernel
$$ K^\mu_d (w,z) = \sum_{j=0}^d P_j(w) \overline{P_j(z)}, \ \ d \geq 0.$$
\bigskip

\item For ever pair of integers $k,\ell \geq 0$ compute the trace of commutator
$$ {\rm Tr}[(S^\ast)^{k+1}, S^{\ell+1}] = $$ $$
\sum_{j=0}^\infty\lim_{d \rightarrow  \infty}  \int_C ( z^{\ell+1}\overline{z}^{k+1} |p_j(z)|^2 -
z^{\ell+1} \overline{p_j(z)}\int_C K^\mu_d(z,\zeta) \overline{\zeta}^{k+1} p_j(\zeta) {d\mu}(\zeta) ){ d\mu}(z)=$$
$$ \sum_{j=0}^\infty\lim_{d \rightarrow  \infty} \int_{\C \times \C} K^\mu_d(z,\zeta) z^{\ell+1}\overline{p_j(z)} [ \overline{z}^{k+1}p_j(z)- \overline{\zeta}^{k+1}p_j(\zeta)]
{d\mu}(\zeta) {d\mu}(z).$$
\bigskip

\item The moments of the ``cloud'' carrying uniform mass equal to one are:
$$ a_{k\ell} = \int_\Omega z^k \overline{z}^\ell {\rm dA}(z) = \frac{\pi}{(k+1)(\ell + 1)} {\rm Tr}[(S^\ast)^{k+1}, S^{\ell+1}], \ \ k,\ell \geq 0.$$
\bigskip

\item Use one of the reconstruction of shapes from moments procedures (outlined in Section 3) to approximate, or find exactly $\Omega$.
\end{enumerate}
}
\bigskip

The limits above, that is the trace of the commutator, exist by Berger and Shaw Theorem. The decomposition
$$ P^2(\mu)  = P^2(\chi_\Omega w {\rm dA}) \oplus L^2(\nu)$$
is trivial, given the finite dimensionality of the space $L^2(\nu)$ and the existence of a polynomial which vanishes exactly at the support of $\nu$.
Notice that in Step 2) solely depends on the moments of the measure $\mu$.
The hypotheses in the algorithm can be obviously be much relaxed, allowing for instance a union of disjoint simply connected domains and 
an array of singular measures $\nu$ (with respect to area) whose support does not 
disconnect the plane.

A simple consequence of the algorithm can be formulated in terms of the associated Hessenberg matrix. We provide one such instance with numerical 
matrix analysis flavor. To be more precise, write, in the conditions and notation adopted in the algorithm:
$$ z p_k(z) = \sum_{n=0}^{k+1} h_{nk} p_n(z), \ \ k \geq 0.$$
The Hessenberg matrix $(h_{nk})_{n,k=0}^\infty$ has only the first sub-diagonal non-zero and it represents the subnormal operator $S=S_\mu$ with respect to the orthonormal basis
$(p_k)_{k=0}^\infty$.

Fix an integer $j \geq 0$. Then
$$ \langle S^\ast S p_j, p_j \rangle = \| S p_j \|^2 = \sum_{n=0}^{j+1} |h_{nj}|^2,$$
while
$$ \langle S S^\ast p_j, p_j \rangle = \| S^\ast p_j \|^2 = \sum_{k=0}^\infty |\langle S^\ast p_j, p_k\rangle |^2 = \sum_{k=0}^\infty |h_{jk}|^2.$$
Hence
$$ {\rm Tr}[S^\ast,S] = \sum_{j=0}^\infty \sum_{k=0}^\infty (|h_{kj}|^2-|h_{jk}|^2).$$
In view of Helton and Howe formula we infer:

\begin{prop}\label{area} Let $(h_{kj})$ denote Hessenberg's matrix associated to the complex orthogonal polynomials in $P^2(\mu)$. If the measure $\mu$ satisfies the conditions in the Algorithm, then the area of the ``cloud'' $\Omega$ can be recovered from the identity:
$$ {\rm Area}(\Omega) = \pi \sum_{j=0}^\infty \sum_{k=0}^\infty (|h_{kj}|^2-|h_{jk}|^2).$$
\end{prop}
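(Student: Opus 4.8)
The plan is to specialize the trace formula \eqref{newmoments} to the lowest order $k=\ell=0$ and to match it against the Hessenberg expression for ${\rm Tr}[S^\ast,S]$ derived in the lines immediately preceding the statement. Once both sides are identified, the claimed identity drops out.

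First I would record that, under the standing hypotheses of the Algorithm, Thompson's partition of $\mu = \chi_\Omega w\,{\rm dA} + \nu$ is as simple as possible. The finite atomic measure $\nu$ produces the normal summand $L^2(\nu)$, while the density assumption \eqref{Bergman-completeness} forces the pure subnormal part to be a single irreducible block acting on $P^2(\chi_\Omega w\,{\rm dA}) = L^2_a(\Omega, w\,{\rm dA})$. Thus there is exactly one chamber, $G_1 = \Omega$, with $\sigma(S_{\chi_\Omega w\,{\rm dA}}) = \overline{\Omega}$. Exactly as in the derivation of \eqref{newmoments} --- via the index formula \eqref{index}, the support identity \eqref{suppg}, and the Carey--Pincus integrality theorem --- the principal function of $S = S_\mu$ equals $\chi_{\overline\Omega}$, and the normal block contributes nothing to any commutator trace.

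Next I would invoke \eqref{newmoments} at $k=\ell=0$. With the single chamber $\overline{G_1} = \overline{\Omega}$ it reads
$$ {\rm Tr}[S^\ast, S] = \frac{1}{\pi}\int_{\overline{\Omega}} {\rm dA}(z) = \frac{1}{\pi}\,{\rm Area}(\Omega),$$
the last step using that the piecewise smooth boundary $\partial\Omega$ has zero area. On the other hand, the matrix computation carried out just above expresses the same trace through the Hessenberg entries as $\sum_{j,k}(|h_{kj}|^2 - |h_{jk}|^2)$. Equating the two expressions and multiplying by $\pi$ is the Proposition.

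The only point demanding care --- and the closest thing to an obstacle --- is the convergence and the legitimacy of the iterated summation, which is supplied by the Berger--Shaw Theorem \eqref{BS}: since $[S^\ast,S]$ is a non-negative trace-class operator, its trace is the finite sum of its diagonal entries $\langle [S^\ast,S] p_j, p_j\rangle = \sum_k (|h_{kj}|^2 - |h_{jk}|^2)$ in the orthonormal polynomial basis, so the double series converges absolutely and the order of summation is immaterial. Beyond this, no genuine difficulty remains: the entire content of the Proposition has been pre-assembled in the proof of \eqref{newmoments} and in the matrix manipulation preceding the statement, and it emerges simply as their confluence at the bottom degree of the moment hierarchy.
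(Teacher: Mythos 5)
Your overall route is the paper's own: the lines preceding the statement compute $\langle [S^\ast,S]p_j,p_j\rangle=\sum_k(|h_{kj}|^2-|h_{jk}|^2)$ from the Hessenberg structure, and the Proposition is then exactly formula (\ref{newmoments}) at $k=\ell=0$, where the hypotheses of the Algorithm reduce Thompson's partition to the single chamber $G_1=\Omega$ and the principal function to $\chi_{\overline\Omega}$. That part of your argument is correct and is what the paper intends by ``in view of Helton and Howe formula we infer.''

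However, your final convergence remark contains a genuine error: you claim that ``the double series converges absolutely and the order of summation is immaterial.'' This does not follow from Berger--Shaw, and it is in fact \emph{false} whenever ${\rm Area}(\Omega)>0$. What Berger--Shaw gives is convergence of the \emph{iterated} sum in the specific order written: for each fixed $j$ both $\sum_k|h_{kj}|^2=\|Sp_j\|^2$ and $\sum_k|h_{jk}|^2=\|S^\ast p_j\|^2$ are finite, so the inner sum is a well-defined non-negative number (the $j$-th diagonal entry of $[S^\ast,S]$), and the outer sum of these non-negative entries converges to the trace. But the unordered double series $\sum_{j,k}\bigl(|h_{kj}|^2-|h_{jk}|^2\bigr)$ would, if absolutely convergent, vanish identically --- relabelling $j\leftrightarrow k$ in the first term cancels the second --- which is precisely the paper's remark that $S$ Hilbert--Schmidt forces ${\rm Area}(\Omega)=0$, and precisely why the author warns that ``the ordering of terms and summation in Proposition \ref{area} is crucial.'' The Proposition as stated (an iterated sum, inner over $k$, outer over $j$) is still correct, so the error does not sink the proof, but the justification you offer for the summation step is the opposite of the true state of affairs and should be replaced by the diagonal-entry argument alone.
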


Similarly one can derive a formula for the center of mass of $\Omega$:
$$ \int_\Omega z {\rm dA}(z) = \frac{\pi}{2} \sum_{j=0}^\infty \sum_{\ell \leq k+1} h_{\ell k} (h_{kj} \overline{h_{\ell j}} - h_{j\ell} \overline{h_{jk}}).$$

The alert reader will recognize that the ordering of terms and summation in Proposition \ref{area} is crucial. For instance, if 
$$ \sum_{j,k=0}^\infty | h_{jk}|^2 < \infty,$$
that is $S$ is a Hilbert-Schmidt operator, then ${\rm Area}(\Omega)=0$. Recall also that ${\rm Tr}[A,B] = 0$ for any trace-class operator $A$ and bounded operator $B$.
Consequently a trace class perturbation of the Hessenberg matrix $S$ will not affect Step 2) in the Algorithm, and hence the transformed moments $(a_{k\ell})$. Even more, a theorem
of Voiculescu \cite{Voiculescu-OT2} implies the same invariance of traces of commutators (appearing in Step 2)) by any Hilbert-Schmidt perturbation of $S$ subject to $[S^\ast,S]$ being trace-class. We put together these observations in the following statement.

\begin{prop} In the conditions of the Algorithm, a trace-class additive perturbation $\tilde{S}$ of Hessenberg's matrix $S$, or a Hilbert-Schmidt additive perturbation $\tilde{S}$ 
satisfying ${\rm Tr}| [\tilde{S}^\ast, \tilde{S}] | <\infty$, will not alter the output:
$$ {\rm Tr}[(S^\ast)^{k+1}, S^{\ell+1}] = {\rm Tr}[({\tilde S}^\ast)^{k+1}, {\tilde S}^{\ell+1}], \ \ k, \ell \geq 0.$$
\end{prop}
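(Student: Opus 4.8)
The plan is to treat the two perturbation classes separately, since only the first admits an elementary argument while the second genuinely requires Voiculescu's theorem. In both cases the first point to settle is that the right-hand traces are defined. Writing $\tilde S = S + K$, one has
$$ [\tilde S^\ast, \tilde S] = [S^\ast, S] + [S^\ast, K] + [K^\ast, S] + [K^\ast, K], $$
so when $K$ is trace class every added term is trace class and $[\tilde S^\ast, \tilde S]$ is automatically trace class; when $K$ is only Hilbert--Schmidt this is precisely the extra hypothesis ${\rm Tr}\,|[\tilde S^\ast, \tilde S]| < \infty$ that we impose. Since a trace-class self-commutator forces every commutator of polynomials in $\tilde S, \tilde S^\ast$ to be trace class, the operators $[(\tilde S^\ast)^{k+1}, \tilde S^{\ell+1}]$ are trace class in either case, and the Helton--Howe theory (which does not require hyponormality, only a trace-class self-commutator) applies to $\tilde S$.

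For the trace-class case I would argue directly, exploiting the ideal property of the trace-class operators. Since $K$ and $K^\ast$ are trace class, expanding
$$ (\tilde S^\ast)^{k+1} = (S^\ast + K^\ast)^{k+1} = (S^\ast)^{k+1} + A, \qquad \tilde S^{\ell+1} = (S + K)^{\ell+1} = S^{\ell+1} + B, $$
every monomial in $A$ and in $B$ contains at least one factor of $K^\ast$, respectively $K$, so both $A$ and $B$ are trace class. Expanding the commutator bilinearly gives
$$ [(\tilde S^\ast)^{k+1}, \tilde S^{\ell+1}] = [(S^\ast)^{k+1}, S^{\ell+1}] + [(S^\ast)^{k+1}, B] + [A, S^{\ell+1}] + [A, B]. $$
Now I would take traces term by term: the first reproduces the unperturbed quantity, while the last three vanish because ${\rm Tr}[X, Y] = 0$ whenever one of $X, Y$ is trace class and the other bounded (for the two middle terms) or both are trace class (for the last term), as recalled just before the statement. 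This yields the claimed equality in the trace-class case.

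The Hilbert--Schmidt case is where the difficulty concentrates, and the telescoping above breaks down: when $K$ is merely Hilbert--Schmidt the operators $A$ and $B$ are only Hilbert--Schmidt, so the individual commutators $[(S^\ast)^{k+1}, B]$ and $[A, S^{\ell+1}]$ need not be trace class and the term-by-term cancellation is meaningless. Here I would instead pass to the principal function: by Helton--Howe the traces ${\rm Tr}[(\tilde S^\ast)^{k+1}, \tilde S^{\ell+1}]$ are determined entirely by $g_{\tilde S}$, so the content of the statement is the invariance $g_{\tilde S} = g_S$ under Hilbert--Schmidt perturbations subject to the trace-class self-commutator constraint. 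This invariance is exactly Voiculescu's theorem \cite{Voiculescu-OT2}. The main obstacle is therefore not a computation but the correct invocation of that theorem, namely verifying that its hypotheses (a Hilbert--Schmidt perturbation, with trace-class self-commutators for both $S$ and $\tilde S$) are met, which the reduction in the first paragraph supplies; after that, Helton--Howe's formula converts the equality of principal functions into the asserted equality of all the moment traces.
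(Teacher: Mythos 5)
Your proof is correct and follows essentially the same route as the paper, which disposes of the trace-class case by the ideal property together with ${\rm Tr}[A,B]=0$ for $A$ trace class and $B$ bounded, and of the Hilbert--Schmidt case by invoking Voiculescu's theorem. Your version merely spells out the bilinear expansion of the commutator and the well-definedness of each trace, which the paper leaves implicit.
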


More details about an asymptotic triangularization with respect to the Hilbert-Schmidt class of the operator $S^\ast$ (fulfilling the constraints of the Algorithm) can be found in \cite{Voiculescu-OT2}.

\bibliography{CDbibl}
\bibliographystyle{plain}

\end{document}